\documentclass[a4paper,12pt]{article}
\usepackage{comment,amsmath,amssymb,amsthm,pstricks,changepage,pifont,graphicx}
\usepackage[english]{babel}

\newtheorem{theorem}{Theorem}

\newtheorem{lemma}[theorem]{Lemma}

\newcommand{\conv}{\text{conv}}

\oddsidemargin = 15pt
\evensidemargin = 15pt
\topmargin = 0pt
\textwidth = 425pt
\headheight = 15pt
\headsep = 0pt

\begin{document}

\title{A minimal set of generators for the canonical ideal of a non-degenerate curve}
\author{Wouter Castryck\footnote{Supported by F.W.O.-Vlaanderen.} ~and Filip Cools}
\date{}

\maketitle

\begin{abstract}
  \noindent We give an explicit way of writing down a minimal set of generators 
  for the canonical ideal of a non-degenerate curve, or of a more general smooth projective curve in
  a toric surface, in terms of its defining Laurent polynomial.\\
  
\noindent \emph{MSC2010:} Primary 14H45, Secondary 14M25\\

\noindent Accompanying Magma file: \texttt{canonical.m}
\end{abstract}

\section{Introduction}

Let $k$ be an algebraically closed field and consider the affine torus
$\mathbb{T}^2 = \left( k \setminus \{0\} \right)^2$. Let $\Delta \subset \mathbb{R}^2$
be a two-dimensional lattice polygon and define $N = \sharp( \Delta \cap \mathbb{Z}^2)$. 
In this article we are concerned
with algebraic curves $U_f \subset \mathbb{T}^2$ that are
cut out by a sufficiently generic Laurent polynomial 
\[ f = \sum_{(i,j) \in \Delta \cap \mathbb{Z}^2} c_{i,j} x^iy^j \, \in k[x^{\pm 1}, y^{\pm 1}]. \] 
Here `sufficiently generic' means
that $f$ is contained in a certain Zariski dense subset of the corresponding $N$-dimensional
coefficient space. More precisely, to each $(i,j) \in \Delta \cap \mathbb{Z}^2$ 
we associate a formal variable $X_{i,j}$, and we let
\[ \mathbb{P}^{N-1} = \text{Proj}\, k[X_{i,j}]_{(i,j) \in \Delta \cap \mathbb{Z}^2}. \]
We have a natural embedding
\[ \varphi_\Delta : \mathbb{T}^2 \hookrightarrow \mathbb{P}^{N-1} : (x,y) \mapsto \left( x^iy^j \right)_{(i,j) \in \Delta \cap \mathbb{Z}^2}, \]
the Zariski closure of the image of which is a toric surface 
that we denote by $\text{Tor}(\Delta)$. 
Note that $\varphi_\Delta(U_f)$ is contained in the hyperplane section
\[ H : \sum_{(i,j) \in \Delta \cap \mathbb{Z}^2} c_{i,j} X_{i,j} = 0\]
of $\text{Tor}(\Delta) \subset \mathbb{P}^{N-1}$.
Then by `sufficiently generic' we mean that the Zariski closure $C_f$ of 
$\varphi_\Delta(U_f)$ is a smooth projective curve that equals this hyperplane section. Bertini's theorem
implies that this is indeed a Zariski dense condition. Alternatively and more explicitly, 
for $C_f$ to arise as a smooth hyperplane section of $\text{Tor}(\Delta)$, it suffices that $f$ is \emph{non-degenerate} with respect 
to $\Delta$, in the sense that
for each face $\tau \subset \Delta$ (vertex, edge, or $\Delta$ itself) 
the system
\[ f_\tau = \frac{\partial f_\tau}{\partial x} = \frac{\partial f_\tau}{\partial y} = 0 \]
has no solutions in $\mathbb{T}^2$. Here $f_\tau$ is obtained from $f$ by restricting to those terms that are supported on $\tau$.
Non-degeneracy is known to be generically satisfied; see \cite[Prop.\,1]{CDV}.\\

\noindent \emph{Remark.} Every (nef and big) smooth projective curve $C$ on a toric surface $X$ 
arises as such a toric hyperplane section. Indeed, let $D_C$ be a torus-invariant divisor on $X$ that is linearly equivalent to $C$, and
let $\Delta$ be the two-dimensional lattice polygon associated to $D_C$ (here we use that $C$ is nef and big).
Then the $\mathbb{T}^2$-part of $C$ is cut out by a Laurent polynomial $f \in k[x^{\pm 1}, y^{\pm 1}]$
that is supported on $\Delta$. The above construction then yields a hyperplane section $C_f$ of $\text{Tor}(\Delta)$
that is isomorphic to $C$.\\

\noindent We refer to \cite[\S3-4]{CaCo1} and the references therein for more background, both on curves in toric surfaces
and on non-degenerate Laurent polynomials. Various of these references assume the base field $k$ to be of characteristic $0$,
but we emphasize that the material presented below is valid in any characteristic.\\

\noindent The main result of this paper is an explicit recipe for writing down a minimal set of generators
for the canonical ideal of curves of the form $C_f$, where $f \in k[x^{\pm 1}, y^{\pm 1}]$ 
satisfies the above generic condition (e.g.\ non-degeneracy) with respect to a given
two-dimensional lattice polygon $\Delta$.\\

\noindent A quick implementation of the resulting algorithm already heavily outperforms 
Magma's built-in function for computing canonical ideals \cite{magma}.
The latter
relies on general lattice basis reduction algorithms that were developed by Hess \cite{Hess}.
Our code can be
found in the file \texttt{canonical.m} that accompanies the article.
It allows one to compute the canonical ideal of a non-degenerate curve of genus $g \approx 100$
in a matter of minutes, whereas everything beyond $g = 20$ looks hopeless using the Magma intrinsic,
both in terms of time and memory.
Of course, this comes at the cost of working in less generality, but note that the condition of non-degeneracy
is generically satisfied (for a fixed instance of $\Delta$), and easy to verify. 
It therefore seems useful to begin the computation of the canonical ideal
with a test for whether the input polynomial is non-degenerate or not, and if yes, to proceed
with the method presented here.\\ 

\noindent Our starting point is a theorem by Khovanskii~\cite{Khovanskii}, stating that
there exists a canonical divisor $K_\Delta$ on $C_f$ such that a basis
for $H^0(C_f, K_\Delta)$ is given by 
\[ \left\{ x^i y^j \right\}_{(i,j) \in \Delta^{(1)} \cap \mathbb{Z}^2}, \]
where $\Delta^{(1)}$ denotes the convex hull of the interior lattice points of $\Delta$. Here $x,y$
are viewed as functions on $C_f$ through $\varphi_\Delta$. See \cite[Prop.\,10.5.8]{coxlittleschenck} for a modern proof. 
Two notable corollaries are:
\begin{itemize}
  \item The genus of $C_f$ equals $g = \sharp(\Delta^{(1)} \cap \mathbb{Z}^2)$.
  \item If $g \geq 2$ then the linear system $|K_\Delta|$ maps $U_f$ inside the image of $\varphi_{\Delta^{(1)}}$. In particular:
  \begin{itemize}
    \item if $\Delta^{(1)}$ is one-dimensional then the canonical image of $C_f$ is a rational normal curve of
    degree $g-1$, hence $C_f$ is hyperelliptic;
    \item if $\Delta^{(1)}$ is two-dimensional, then $C_f$ is non-hyperelliptic and the canonical image of $C_f$ is contained in the toric
    surface $\text{Tor}(\Delta^{(1)}) \subset \mathbb{P}^{g-1}$.
  \end{itemize}
\end{itemize}
\noindent See \cite[\S4]{CaCo1} and its references for more details.\\

\noindent In what follows we assume that $C_f$ is non-hyperelliptic or, equivalently, 
that $\Delta^{(1)}$ is two-dimensional. Then the generators for the canonical ideal of $C_f$
are gathered in two steps.
\begin{itemize}
\item In Section~\ref{section_toricideal}, which can be
seen as an addendum to previous work by Koelman \cite{Koelman1,Koelman2},
we will describe a method for finding 
a minimal set of generators for the ideal of $\text{Tor}(\Delta^{(1)})$. We also provide
explicit formulas for the number of generators in each degree.
Because of the independent interest, we will do this
for toric surfaces $\text{Tor}(\Gamma)$ where $\Gamma$ is an \emph{arbitrary}
two-dimensional lattice polygon (not necessarily of the form $\Delta^{(1)}$).

\item Then in Section~\ref{section_ideal}, we will explicitly describe which generators have to be added
in order to obtain a minimal set of generators for the canonical ideal of $C_f$. These can be
seen as analogues of Reid's rolling factors~\cite{Stevens}, where the `rolling' now happens in two directions, rather than one.
\end{itemize}

\noindent \emph{Notation and terminology.} We use a special notation for two recurring polygons
\[ \Sigma = \conv \{ (0,0), (1,0), (0,1) \}, \qquad \qquad \Upsilon = \conv \{ (-1,-1), (1,0), (0,1) \}, \] 
and write $\cong$ to indicate unimodular equivalence. For instance, 
$\Delta \cong \Sigma$ if and only if $\Delta$ is a unimodular simplex.
We recall that the convex hull of the interior lattice points of
a two-dimensional lattice polygon $\Delta$ is denoted by $\Delta^{(1)}$. If
the latter is again two-dimensional, we abbreviate $\Delta^{(1)(1)}$ by $\Delta^{(2)}$.
We use $\Delta^\circ$ to denote the topological interior of $\Delta$, and write $\partial \Delta$ for
its boundary.
A two-dimensional lattice polygon $\Delta$ is said to be \emph{hyperelliptic} if $\Delta^{(1)}$ 
is one-dimensional. If $X$ is
a projectively embedded variety over $k$, we write $\mathcal{I}(X)$ for its defining ideal. For each non-negative integer
$d$ we use $\mathcal{I}_d(X)$ to denote the $k$-vector space of homogeneous degree $d$ polynomials
that are contained in $\mathcal{I}(X)$.

\section{The ideal of a toric surface} \label{section_toricideal}

\noindent Let $\Gamma \subset \mathbb{R}^2$ be a two-dimensional 
lattice polygon and let $N = \# (\Gamma \cap \mathbb{Z}^2)$. Define
$\text{Tor}(\Gamma)$ as the Zariski closure inside $\mathbb{P}^{N-1}$ of the image of $\varphi_\Gamma$.
A result due to Koelman \cite{Koelman1,Koelman2} states that the ideal $\mathcal{I}(\text{Tor}(\Gamma))$ is generated by all binomials
\[ \prod_{\ell=1}^n X_{i_\ell,j_\ell} - \prod_{\ell=1}^n X_{i'_\ell,j'_\ell} \qquad \text{for which} \qquad \sum_{\ell=1}^n (i_\ell,j_\ell) = \sum_{\ell=1}^n (i'_\ell,j'_\ell) \]
where $n \in \{2,3\}$. Moreover one can restrict to $n=2$ if and only if $\sharp (\partial \Gamma \cap \mathbb{Z}^2) \geq 4$ or
$\Gamma$ is a unimodular simplex. This result was generalized to property $N_p$ for arbitrary $p$ by Hering and Schenck; see \cite[Thm.\,4.20]{Hering}.\\

\noindent The current section can be seen as an addendum to Koelman's work: we give explicit formulas for the \emph{number} of quadrics
and cubics in a minimal set of homogeneous generators for $\mathcal{I}(\text{Tor}(\Gamma) )$.

\begin{lemma} \label{toricdegreed}
 For all integers $d \geq 0$ one has:
 $$\dim \mathcal{I}_d \left(\emph{Tor}(\Gamma) \right)={ \sharp (\Gamma \cap \mathbb{Z}^2) +d - 1 \choose d}-\sharp (d\Gamma \cap \mathbb{Z}^2).$$
\end{lemma}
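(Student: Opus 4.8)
The plan is to interpret both sides of the claimed identity in terms of the graded pieces of the homogeneous coordinate ring of $\mathbb{P}^{N-1}$ and of $\text{Tor}(\Gamma)$. Write $S = k[X_{i,j}]_{(i,j) \in \Gamma \cap \mathbb{Z}^2}$, so that $S$ is a polynomial ring in $N = \sharp(\Gamma \cap \mathbb{Z}^2)$ variables. The degree-$d$ piece $S_d$ has dimension $\binom{N+d-1}{d}$, which is exactly the first term on the right-hand side. Since $\mathcal{I}(\text{Tor}(\Gamma))$ is a homogeneous ideal, there is an exact sequence of $k$-vector spaces
\[
0 \longrightarrow \mathcal{I}_d(\text{Tor}(\Gamma)) \longrightarrow S_d \longrightarrow \left( S / \mathcal{I}(\text{Tor}(\Gamma)) \right)_d \longrightarrow 0,
\]
so it suffices to show that $\dim_k \left( S / \mathcal{I}(\text{Tor}(\Gamma)) \right)_d = \sharp(d\Gamma \cap \mathbb{Z}^2)$.

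The key step is then to identify the homogeneous coordinate ring $S / \mathcal{I}(\text{Tor}(\Gamma))$ with the semigroup ring associated to the cone over $\Gamma$. Concretely, the map $\varphi_\Gamma$ sends the degree-$d$ monomial $\prod_{\ell=1}^d X_{i_\ell, j_\ell}$ to $x^{\sum i_\ell} y^{\sum j_\ell}$; since each $(i_\ell, j_\ell) \in \Gamma \cap \mathbb{Z}^2$, the exponent vector $\sum_{\ell=1}^d (i_\ell, j_\ell)$ lies in $d\Gamma \cap \mathbb{Z}^2$, and conversely every lattice point of $d\Gamma$ arises this way because the lattice points of a lattice polygon dilated by a positive integer $d$ are exactly the $d$-fold sums of lattice points of the polygon (this is where two-dimensionality, i.e. normality of the polygon in the relevant sense, is used — a standard fact for lattice polygons in the plane). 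Hence the image of the ring homomorphism $S \to k[x^{\pm 1}, y^{\pm 1}]$ induced by $\varphi_\Gamma$ has degree-$d$ part spanned precisely by $\{ x^iy^j : (i,j) \in d\Gamma \cap \mathbb{Z}^2 \}$, and these monomials are linearly independent in the Laurent polynomial ring. By definition $\text{Tor}(\Gamma)$ is the Zariski closure of the image of $\varphi_\Gamma$, so $\mathcal{I}(\text{Tor}(\Gamma))$ is exactly the kernel of this homomorphism, giving $\left( S / \mathcal{I}(\text{Tor}(\Gamma)) \right)_d \cong \bigoplus_{(i,j) \in d\Gamma \cap \mathbb{Z}^2} k \cdot x^iy^j$, of dimension $\sharp(d\Gamma \cap \mathbb{Z}^2)$.

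Combining the two displayed facts yields $\dim \mathcal{I}_d(\text{Tor}(\Gamma)) = \binom{N+d-1}{d} - \sharp(d\Gamma \cap \mathbb{Z}^2)$, as claimed; the cases $d = 0, 1$ are consistent since $\mathcal{I}_0 = \mathcal{I}_1 = 0$ and $\sharp(0 \cdot \Gamma \cap \mathbb{Z}^2) = 1$, $\sharp(\Gamma \cap \mathbb{Z}^2) = N$. The main obstacle is the surjectivity claim in the previous paragraph — namely that every lattice point of $d\Gamma$ is a sum of $d$ lattice points of $\Gamma$. For $d \Gamma$ with $\Gamma$ two-dimensional this holds, but it is the one nontrivial input; I would either cite the standard result that $\text{Tor}(\Gamma)$ is projectively normal (equivalently that the polytope $\Gamma$ is "integrally closed" / "normal" at all levels, true for any lattice polygon of dimension $\leq 2$, or indeed for any $d$-th dilate with $d \geq \dim \Gamma - 1$), or give the short direct triangulation argument: decompose $\Gamma$ into unimodular triangles, observe that for a unimodular triangle the statement is elementary, and patch the pieces together.
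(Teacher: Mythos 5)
Your proof is correct and follows essentially the same route as the paper: both reduce the claim to the statement that the degree-$d$ part of the homogeneous coordinate ring of $\text{Tor}(\Gamma)$ is spanned by the monomials $x^iy^j$ with $(i,j) \in d\Gamma \cap \mathbb{Z}^2$, the only nontrivial input being the normality of two-dimensional lattice polygons, which the paper likewise invokes as a standard fact (citing Bruns--Gubeladze--Trung).
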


\begin{proof}
The $k$-vector space morphism
$$ \chi_d : \mathcal{I}_d(\mathbb{P}^{N-1}) \rightarrow k[x^{\pm 1}, y^{\pm 1}] : X_{i_1,j_1}\cdots X_{i_d,j_d} \mapsto x^{i_1+\cdots+i_d}y^{j_1+\cdots +j_d} $$
has kernel $\mathcal{I}_d (\text{Tor}(\Gamma) )$ and surjects onto 
$\langle x^iy^j \rangle_{(i,j) \in d\Gamma \cap \mathbb{Z}^2}$ (here we use that 
two-dimensional lattice polygons are always normal \cite[Prop.\,1.2.2-4]{bruns}, i.e.\ every lattice point
in $d\Gamma$ is the sum of $d$ lattice points in $\Gamma$).
\end{proof}

\noindent The main result of this section is:

\begin{theorem}
 A minimal set of generators for $\mathcal{I}(\emph{Tor}(\Gamma) )$ consists of
 \[ { \sharp (\Gamma \cap \mathbb{Z}^2) + 1 \choose 2}-\sharp (2\Gamma \cap \mathbb{Z}^2) \text{ quadrics and $c_\Gamma$ cubics,}\]
 where
 \[ c_\Gamma = \left\{ \begin{array}{ll} 0 & \text{if $\sharp (\partial \Gamma \cap \mathbb{Z}^2) \geq 4$ or $\Gamma \cong \Sigma$,} \\ 
                                         1 & \text{if $\sharp (\partial \Gamma \cap \mathbb{Z}^2) = 3$, $\Gamma \not \cong \Sigma$, and $\Gamma$ is non-hyperelliptic,} \\
                                         \sharp (\Gamma \cap \mathbb{Z}^2) - 3 & \text{if $\sharp (\partial \Gamma \cap \mathbb{Z}^2) = 3$, $\Gamma \not \cong \Sigma$, and $\Gamma$ is hyperelliptic.} \\ \end{array} \right. \]

 \end{theorem}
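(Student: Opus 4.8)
The plan is to count the minimal number of generators of $\mathcal{I}(\text{Tor}(\Gamma))$ in each degree using the general principle that, for a homogeneous ideal $\mathcal{I}$ in a polynomial ring, the number of degree-$d$ elements in a minimal generating set equals $\dim \mathcal{I}_d - \dim (R_1 \cdot \mathcal{I}_{d-1})$, where $R_1$ is the degree-$1$ part of the coordinate ring. For the quadric count we use Lemma~\ref{toricdegreed} with $d = 2$: since $\mathcal{I}_1(\text{Tor}(\Gamma)) = 0$ (the map $\varphi_\Gamma$ is an embedding into $\mathbb{P}^{N-1}$ with linearly independent coordinate functions), every element of $\mathcal{I}_2$ is a minimal generator, so the number of quadrics is exactly $\dim \mathcal{I}_2(\text{Tor}(\Gamma)) = \binom{\sharp(\Gamma \cap \mathbb{Z}^2)+1}{2} - \sharp(2\Gamma \cap \mathbb{Z}^2)$, as claimed. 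By Koelman's theorem, quoted above, $\mathcal{I}(\text{Tor}(\Gamma))$ is generated in degrees $\leq 3$, so no generators of degree $\geq 4$ are needed and it remains only to compute the cubic count $c_\Gamma = \dim \mathcal{I}_3(\text{Tor}(\Gamma)) - \dim (R_1 \cdot \mathcal{I}_2(\text{Tor}(\Gamma)))$.

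The first case, $\sharp(\partial\Gamma \cap \mathbb{Z}^2) \geq 4$ or $\Gamma \cong \Sigma$, is immediate from Koelman: the ideal is generated by quadrics, hence $c_\Gamma = 0$. For the remaining cases we have $\sharp(\partial\Gamma \cap \mathbb{Z}^2) = 3$, meaning $\Gamma$ is (up to unimodular equivalence) a triangle with only its three vertices on the boundary; such polygons are classified, and excluding $\Gamma \cong \Sigma$ we are left with a short list (for instance $2\Sigma$, $\Upsilon$, and the "long" triangles $\conv\{(0,0),(m,0),(0,1)\}$-type shapes appearing after a shear, the latter being precisely the hyperelliptic ones). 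The strategy is to compute the three relevant quantities $\dim\mathcal{I}_2$, $\dim\mathcal{I}_3$ (both via Lemma~\ref{toricdegreed}, so reducing to lattice-point counts in $\Gamma$, $2\Gamma$, $3\Gamma$), and $\dim(R_1 \cdot \mathcal{I}_2)$. The first two are routine once one has formulas for $\sharp(d\Gamma \cap \mathbb{Z}^2)$, which follow from Ehrhart theory (Pick's theorem gives $\sharp(d\Gamma\cap\mathbb{Z}^2) = d^2\text{Area}(\Gamma) + \tfrac{d}{2}\sharp(\partial\Gamma\cap\mathbb{Z}^2) + 1$).

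The genuine work is computing $\dim(R_1 \cdot \mathcal{I}_2(\text{Tor}(\Gamma)))$, equivalently $\dim\mathcal{I}_3 - \dim(R_1 \cdot \mathcal{I}_2)$. Here the key observation is that $R_1 \cdot \mathcal{I}_2$ is spanned by binomials $X_{a} \cdot (X_{b}X_{c} - X_{b'}X_{c'})$ with $b+c = b'+c'$, and one must determine the dimension of the span of these inside $\mathcal{I}_3$. I would set this up combinatorially: index the binomial relations of degree $2$ by pairs of lattice points in $2\Gamma$ with two representations as sums of two points of $\Gamma$, and track which degree-$3$ relations (indexed by points of $3\Gamma$) they generate after multiplication by the $N$ variables. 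The expected outcome is that for non-hyperelliptic $\Gamma$ with $\sharp(\partial\Gamma\cap\mathbb{Z}^2)=3$ and $\Gamma\not\cong\Sigma$ exactly one cubic relation is "missed", while in the hyperelliptic case the deficit grows linearly, giving $\sharp(\Gamma\cap\mathbb{Z}^2) - 3$; in the latter situation $\Gamma$ is a thin triangle and $\text{Tor}(\Gamma)$ is a surface scroll, for which the resolution (and hence the number of cubic generators) is classically known. The main obstacle is thus the bookkeeping showing that the multiplication map $R_1 \otimes \mathcal{I}_2 \to \mathcal{I}_3$ has exactly the claimed corank; I would handle this by direct inspection on the finite list of relevant polygons in each case, using the Ehrhart formulas to pin down all the dimensions and exhibiting an explicit cubic not in $R_1\cdot\mathcal{I}_2$ (e.g.\ the Scorza-type cubic coming from the "middle" lattice point) to establish the lower bound, and a spanning argument for the upper bound.
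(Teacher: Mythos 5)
Your framework is fine as far as it goes: the quadric count via Lemma~\ref{toricdegreed} together with $\mathcal{I}_1(\text{Tor}(\Gamma))=0$ is exactly the paper's argument, the reduction to degrees $\leq 3$ via Koelman is right, and identifying $c_\Gamma$ with the corank of the multiplication map $R_1\otimes\mathcal{I}_2\to\mathcal{I}_3$ is the correct target. But the computation of that corank --- which is the entire content of the theorem beyond the first case --- is never carried out: it is announced as an ``expected outcome'' to be verified by ``direct inspection on the finite list of relevant polygons.'' No such finite list exists. Triangles with exactly three boundary lattice points form an infinite family in both subcases: the hyperelliptic ones are parametrized by $r=\sharp(\Gamma\cap\mathbb{Z}^2)-3$, which is unbounded (and is precisely the quantity $c_\Gamma$ you must compute, so it cannot be read off from finitely many examples), and there are non-hyperelliptic ones of arbitrarily large genus as well --- the paper's own running example $\conv\{(0,1),(7,0),(2,4)\}$, with $11$ non-collinear interior points, is one. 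Incidentally, $2\Sigma$ does not belong on your list at all: it has six boundary lattice points. So the case analysis you propose cannot close the argument, and the appeal to the ``classically known resolution of a scroll'' in the hyperelliptic case is also misplaced, since these toric surfaces (triangles, not trapezoids) are not rational normal scrolls.

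What is missing is a uniform argument, and the paper supplies one by exploiting that $\mathcal{I}(\text{Tor}(\Gamma))$ is generated by binomials. In the non-hyperelliptic case it shows: (i) any cubic binomial $C$ whose support polygon $\Gamma_C$ is strictly contained in $\Gamma$ already lies in the ideal generated by quadrics, because $\Gamma_C$ sits inside a subpolygon $\Gamma'$ (obtained by deleting a vertex of $\Gamma$) that has at least four boundary lattice points --- this requires a small Pick's-theorem argument --- so Koelman's criterion applies to $\Gamma'$; and (ii) if $\Gamma_C=\Gamma$ then one of the two monomials of $C$ must use all three vertices of $\Gamma$, so the sum or difference of any two such binomials has strictly smaller support polygon and falls under (i). Hence one cubic suffices and $c_\Gamma=1$. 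In the hyperelliptic case the paper normalizes $\Gamma$ to a standard form, exhibits the $r$ cubic binomials containing the monomials $X_{-1,1}X_{0,-1}X_{i,0}$, shows they generate together with the quadrics by the same difference trick, and shows they are independent modulo $R_1\cdot\mathcal{I}_2$ because no quadratic binomial involves $X_{-1,1}$ or $X_{0,-1}$. Some argument of this kind, valid for all polygons in each infinite family at once, is what your proposal lacks.
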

\begin{proof}
 The formula for the number of quadrics follows from Lemma~\ref{toricdegreed} along with the fact
 that $\text{Tor}(\Gamma)$ is not contained in any hyperplane of $\mathbb{P}^{N-1}$. By Koelman's result,
 it remains to prove the formula for the number of cubics $c_\Gamma$ when $\sharp (\partial \Gamma \cap \mathbb{Z}^2) = 3$
 and $\Gamma \not \cong \Sigma$. We
 moreover know that $c_\Gamma \geq 1$ in these cases. Also recall that 
 $\mathcal{I}(\text{Tor}(\Gamma))$ is generated by binomials.\\
 
 \noindent First assume that $\Gamma$ is non-hyperelliptic and $\Gamma \not \cong \Upsilon$. Along with
 $\sharp (\partial \Gamma \cap \mathbb{Z}^2) = 3$ and $\Gamma \not \cong \Sigma$ this implies
 that $\Gamma^{(1)}$ is two-dimensional; see e.g.\ Koelman's classification
 \cite[Ch.\,4]{Koelman}, although this could also serve as an easy exercise.
 Let $\{v_1,v_2,v_3\}$ be the three vertices of $\Gamma$
 and consider
 \[ \Gamma' = \conv \left( (\Delta \setminus \{ v_1 \} ) \cap \mathbb{Z}^2 \right).\]
 Then $\Gamma' \supset \Gamma^{(1)}$ is again a two-dimensional lattice polygon. 
 We claim that there are at least $4$ lattice points on its boundary. 
 Indeed, if there would only be $3$ such lattice points, then $\Gamma'$ would be a triangle whose vertices are $\{ v, v_2, v_3\}$, where
 $v$ is contained in the interior of $\Gamma$,
 and the triangles $v_1$-$v$-$v_2$ and $v_1$-$v$-$v_3$ are unimodular simplices (i.e.\ they do 
 not contain any lattice points besides the vertices).
\begin{center}
    \includegraphics[scale=0.9]{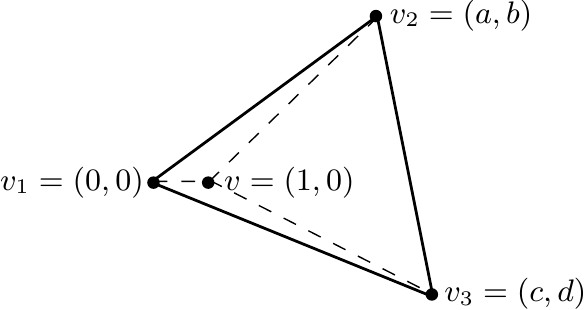}
\end{center}
 We may assume that $v_1 = (0,0)$, $v = (1,0)$, $v_2 = (a,b)$ and $v_3 = (c,d)$, where
 $b > 0 > d$. 
By Pick's theorem the unimodularity of $v_1$-$v$-$v_2$ and $v_1$-$v$-$v_3$ 
 implies that $b = 1$ and $d = -1$, and hence that $\Gamma$ 
 is contained in a horizontal strip of width $2$: a contradiction with
 the fact that $\Gamma^{(1)}$ is two-dimensional. So the claim follows.
 Now consider a binomial
 \begin{equation} \label{cubicbin}
  C = X_{i_1,j_1} X_{i_2,j_2} X_{i_3,j_3} - X_{i'_1,j'_1} X_{i'_2,j'_2} X_{i'_3,j'_3} \in \mathcal{I}_3(\text{Tor}(\Gamma))
 \end{equation}
 and define $\Gamma_C = \conv \{(i_1,j_1), (i_2,j_2), (i_3,j_3), (i_1',j_1'), (i_2',j_2'), (i_3',j_3') \}$.
 \begin{itemize}
  \item If $\Gamma_C \subsetneq \Gamma$, then by the above $\Gamma_C \subset \Gamma'$ for a subpolygon
  $\Gamma'$ that contains at least $4$ lattice points on the boundary. So by Koelman's result applied
  to $\Gamma'$ our cubic $C$ can be written
  as a linear combination of a number of elements of $\mathcal{I}_2(\text{Tor}(\Gamma))$.
  \item If $\Gamma_C = \Gamma$ then it is not hard to see that either $(i_1,j_1), (i_2,j_2), (i_3,j_3)$ 
  or $(i'_1,j'_1), (i'_2,j'_2), (i'_3,j'_3)$ are the
  three vertices of $\Gamma$; see \cite[Lem.\,2.6]{Koelman2}. 
 \end{itemize}
 It follows that the sum or difference of two binomials $C_1,C_2
 \in \mathcal{I}_3(\text{Tor}(\Gamma))$ that are independent of $\mathcal{I}_2(\text{Tor}(\Gamma))$ 
 is again a cubic binomial $C$. But the latter satisfies $\Gamma_{C} \subsetneq \Gamma$,
 so by the first observation $C$ is expressible as a linear combination
 of elements of $\mathcal{I}_2(\text{Tor}(\Gamma))$. This proves that one cubic is sufficient, i.e.\ $c_\Gamma = 1$.\\
 
 \noindent Next assume that $\Gamma$ is hyperelliptic or $\Gamma \cong \Upsilon$. Using that
 $\sharp (\partial \Gamma \cap \mathbb{Z}^2) = 3$ we find that it is unimodularly equivalent to
\begin{center}
    \includegraphics[scale=0.9]{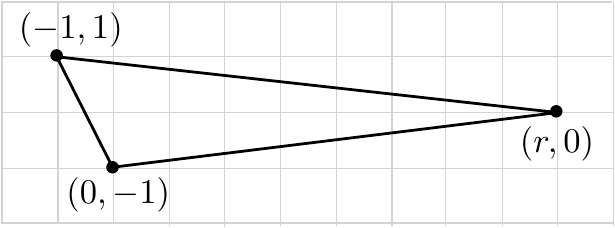}
\end{center} 

 \noindent where $r = \# (\Gamma \cap \mathbb{Z}^2) - 3$. 
 One verifies that the irreducible binomials in 
 $\mathcal{I}_3(\text{Tor}(\Gamma))$ involving $X_{-1,1}$ or $X_{0,-1}$ must involve
 both variables in the same monomial. This monomial is necessarily among
 \[ X_{-1,1}X_{0,-1}X_{i,0} \qquad i = 1, \dots, r \] 
 and conversely, for each of these monomials there is a corresponding binomial $C_i \in
 \mathcal{I}_3(\text{Tor}(\Gamma))$. As before we find that
 the difference or sum of
 two cubic binomials involving the same monomial $X_{-1,1}X_{0,-1}X_{i,0}$ is a linear combination of
 elements of $\mathcal{I}_2(\text{Tor}(\Gamma))$. So we conclude that
 $\mathcal{I}(\text{Tor}(\Gamma))$ is generated by
 $\mathcal{I}_2(\text{Tor}(\Gamma)) \cup \{ C_1, \dots, C_r \}$.
 Because the quadratic binomials in $\mathcal{I}(\text{Tor}(\Gamma))$
 do neither involve $X_{-1,1}$ nor $X_{0,-1}$, the latter $r$ cubics are independent of $\mathcal{I}_2(\text{Tor}(\Gamma))$.
\end{proof}


\vspace{0.2cm}
\noindent We have included Magma code for computing such a minimal set of (binomial) generators; see
our accompanying file \verb"canonical.m". As for
the quadratic generators, this is done by naively gathering all relations of the form
\[ (i_1,j_1) + (i_2,j_2) = (i'_1,j'_1) + (i'_2,j'_2) \]
for $(i_1,j_1),(i_2,j_2),(i'_1,j'_1),(i'_2,j'_2) \in \Gamma \cap \mathbb{Z}^2$,
and then finding a $k$-linearly independent subset of the set of corresponding
binomials
\[ X_{i_1,j_1}X_{i_2,j_2} - X_{i'_1,j'_1}X_{i'_2,j'_2}. \]
In the case where $\sharp \left( \partial \Gamma \cap \mathbb{Z}^2 \right) = 3$, $\Gamma \not \cong \Sigma$ and
$\Gamma$ is non-hyperelliptic, a single binomial of the form (\ref{cubicbin}) with 
$(i_1,j_1),(i_2,j_2), (i_3,j_3)$ the vertices of $\Gamma$ is added
by exhaustive search. In the hyperelliptic case the explicit construction from
the above proof is followed.\\

\noindent \emph{Example.} The code below
carries this out for the following lattice polygon (over $k = \overline{\mathbb{Q}}$):
\begin{center}
    \includegraphics[scale=0.9]{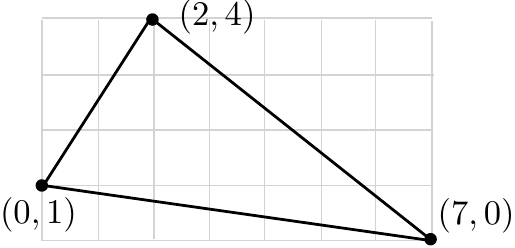}
\end{center}
\noindent \texttt{> load "canonical.m"}\\
\noindent \texttt{Loading "canonical.m"}\\
\noindent \texttt{Loading "basic\char`_commands.m"}\\
\noindent \verb"> P := LatticePolytope([<0,1>,<7,0>,<2,4>]);"\\
\noindent \verb"> time I := TorIdeal(P, Rationals());"\\
\noindent \verb"Time: 0.110"\\
\noindent This can be used as input to more advanced functions, such as the Magma intrinsic for
computing the Betti diagram:\\
\noindent \verb"> BettiTable(GradedModule(Ideal(I)));"\\
\noindent \verb"["\\
\noindent \verb"    [ 1, 0, 0, 0, 0, 0, 0, 0, 0, 0, 0, 0 ],"\\
\noindent \verb"    [ 0, 55, 320, 891, 1424, 1470, 972, 315, 16, 0, 0, 0 ],"\\
\noindent \verb"    [ 0, 1, 11, 71, 480, 1302, 1932, 1886, 1221, 485, 110, 11 ]"\\
\noindent \verb"]"\\

\noindent \emph{Remark.} From the point of view of efficiency the above method leaves room for improvement. 
Especially the gathering of the quadratic generators can be 
done more systematically, for instance using Gr\"obner bases computations. These 
are implicitly invoked by the code below (a continuation of
the above example):\\
\noindent \verb"> AA<x,y> := AffinePlane(Rationals());"\\
\noindent \verb"> latticepoints := ConvexHull(P); N := #latticepoints;"\\
\noindent \verb"> PP := ProjectiveSpace(Rationals(), N-1);"\\
\noindent \verb"> phi_P := map< AA->PP | [x^p[1]*y^p[2] : p in latticepoints] >;"\\
\noindent \verb"> time I := Ideal(Image(phi_P));"\\
\noindent \verb"Time: 0.080"\\
This produces a reduced Gr\"obner basis for $\text{Tor}(\Gamma)$. In general this is \emph{not} a minimal set 
of generators, but its quadratic elements do form a basis of $\mathcal{I}_2(\text{Tor}(\Gamma))$, so that one
can obtain a minimal set of generators by proceeding as above.\\

\noindent \emph{Remark.} Up to unimodular equivalence, the only two-dimensional instances
of $\Delta^{(1)}$ for which $\sharp(\partial \Delta^{(1)} \cap \mathbb{Z}^2) = 3$ are $\Sigma$ and $\Upsilon$.
This can be shown using \cite[Lem.\,9-11]{HaaseSchicho}. Therefore, for the purposes of describing the canonical
ideal of curves in toric surfaces, the above general treatment is more elaborate than needed. We have included
it because we believe it to be of independent interest.

\section{An explicit description of the canonical ideal} \label{section_ideal}

Let $\Delta$ be a two-dimensional lattice polygon and 
let $f \in k[x^{\pm 1}, y^{\pm 1}]$ be a Laurent polynomial
satisfying the sufficiently generic condition from the introduction (e.g.\ non-degeneracy).
Assume that the corresponding curve $C_f$ is non-hyperelliptic of genus $g \geq 3$, i.e.\
$\Delta^{(1)}$ is two-dimensional and $\sharp (\partial \Delta^{(1)} \cap \mathbb{Z}^2) \geq 3$.
Let $C_f^\text{can}$ be the canonical model of $C_f$ obtained using $|K_\Delta|$.\\

\noindent We already know that $\mathcal{I}(C_f^\text{can})$ contains 
$\mathcal{I}(\text{Tor}(\Delta^{(1)}))$, and from the previous section
we know how to find a minimal set of generators for the latter. 
In this section we describe which generators have to be added in order
to obtain a minimal set of generators for $\mathcal{I}(C_f^\text{can})$.
A priori, it is not entirely trivial that it suffices to merely \emph{add} some
generators, but note from the previous remark that $\text{Tor}(\Delta^{(1)})$ is almost always generated
by quadrics, in which case this is clear. The only exception is when $\Delta^{(1)} \cong \Upsilon$,
which corresponds to curves of genus $4$, and is therefore well-understood.\\

\noindent Our main auxiliary tool is:

\begin{theorem} \label{thm_dimIC}
The equality 
$$\dim \mathcal{I}_d(C_f^\emph{can}) -\dim \mathcal{I}_d(\emph{Tor}(\Delta^{(1)})) = \sharp \left(\left((d-1)\Delta^{(1)}\right)^{(1)}\cap \mathbb{Z}^2\right)$$
holds for all integers $d\geq 2$.
\end{theorem}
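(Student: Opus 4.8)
The plan is to compute both sides by passing to global sections of line bundles on $C_f$ and using Khovanskii's description of the canonical map. Recall from Lemma~\ref{toricdegreed} that $\dim\mathcal{I}_d(\text{Tor}(\Delta^{(1)}))$ is the difference between $\binom{g+d-1}{d}$ and $\sharp(d\Delta^{(1)}\cap\mathbb{Z}^2)$, where the first term is $\dim\mathcal{I}_d(\mathbb{P}^{g-1})$ plus $g$, i.e.\ $\dim k[X_{i,j}]_d$. The analogous exact sequence for $C_f^\text{can}$ reads $0\to\mathcal{I}_d(C_f^\text{can})\to k[X_{i,j}]_d\xrightarrow{\mu_d} H^0(C_f,dK_\Delta)$, where $\mu_d$ is the multiplication map on canonical sections; this map is surjective for $d\geq 2$ because $C_f$ is non-hyperelliptic of genus $\geq 3$, by the classical theorem of Max Noether (valid in all characteristics here since $C_f$ is not hyperelliptic). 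Hence $\dim\mathcal{I}_d(C_f^\text{can})=\dim k[X_{i,j}]_d - h^0(C_f,dK_\Delta)=\dim k[X_{i,j}]_d-(2d-1)(g-1)$ for $d\geq 2$, using Riemann--Roch and $\deg dK_\Delta=2d-2>2g-2$. Subtracting the two formulas gives
\[
\dim\mathcal{I}_d(C_f^\text{can})-\dim\mathcal{I}_d(\text{Tor}(\Delta^{(1)}))=\sharp(d\Delta^{(1)}\cap\mathbb{Z}^2)-(2d-1)(g-1).
\]

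It then remains to identify the right-hand side with $\sharp\left(\left((d-1)\Delta^{(1)}\right)^{(1)}\cap\mathbb{Z}^2\right)$, which is a purely combinatorial statement about the dilated polygon $\Delta^{(1)}$. Here I would invoke the lattice-point version of the adjunction/genus formula for curves in toric surfaces: for a two-dimensional lattice polygon $\Gamma$, the number of interior lattice points of $\Gamma$ equals $\sharp(\Gamma\cap\mathbb{Z}^2)-\sharp(\partial\Gamma\cap\mathbb{Z}^2)$, and combining this with Pick's theorem and the fact that $\sharp(\partial(m\Gamma)\cap\mathbb{Z}^2)=m\sharp(\partial\Gamma\cap\mathbb{Z}^2)$ one gets a closed expression for $\sharp(m\Gamma\cap\mathbb{Z}^2)$ as a quadratic quasi-polynomial in $m$ (Ehrhart). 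Applying this with $\Gamma=\Delta^{(1)}$, writing $g-1=\sharp(\Delta^{(1)}\cap\mathbb{Z}^2)-1$ via $g=\sharp(\Delta^{(1)}\cap\mathbb{Z}^2)$, and with $m=d$ versus the interior count of the $(d-1)$-fold dilate, the identity should reduce to the elementary relation $\sharp\bigl(((d-1)\Gamma)^{(1)}\cap\mathbb{Z}^2\bigr)=\sharp((d-1)\Gamma\cap\mathbb{Z}^2)-\sharp(\partial((d-1)\Gamma)\cap\mathbb{Z}^2)=\sharp((d-1)\Gamma\cap\mathbb{Z}^2)-(d-1)\sharp(\partial\Gamma\cap\mathbb{Z}^2)$, after which both sides of the displayed equation become the same quadratic expression in $d$ and one checks equality by comparing the three coefficients (area, perimeter, constant).

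The main obstacle, and the step deserving the most care, is the surjectivity of $\mu_d$ for $d\geq 2$, i.e.\ projective normality of the canonical embedding (Noether's theorem) including the borderline case $d=2$, and making sure this is clean in arbitrary characteristic; once that is granted, everything else is dimension counting plus the Ehrhart/Pick bookkeeping. A subtlety to flag is that one must use precisely Khovanskii's canonical divisor $K_\Delta$ so that $H^0(C_f,K_\Delta)$ is identified with the span of $\{x^iy^j\}_{(i,j)\in\Delta^{(1)}\cap\mathbb{Z}^2}$ and so that the degree-$d$ part of the homogeneous coordinate ring of $C_f^\text{can}$ maps onto $H^0(C_f,dK_\Delta)$ compatibly with the toric picture; the image of $\mu_d$ visibly contains $\langle x^iy^j\rangle_{(i,j)\in d\Delta^{(1)}\cap\mathbb{Z}^2}$, but the point is that for $d\geq 2$ it is all of $H^0(C_f,dK_\Delta)$, which is strictly larger — the discrepancy being exactly the interior lattice points of $(d-1)\Delta^{(1)}$, and this is the geometric content the theorem is isolating.
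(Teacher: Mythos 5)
Your proposal is correct and follows essentially the same route as the paper: the paper likewise computes $\dim\mathcal{I}_d(C_f^{\text{can}})=\binom{g+d-1}{d}-(2d-1)(g-1)$ for $d\geq 2$ (by citing the Hilbert function of a canonically embedded curve, which encodes exactly the Noether projective-normality statement you invoke) and then verifies the remaining combinatorial identity via the Ehrhart polynomial of $\Delta^{(1)}$, Pick's theorem, and $\sharp\bigl(\partial(k\Delta^{(1)})\cap\mathbb{Z}^2\bigr)=kR^{(1)}$. Only cosmetic slips: $\deg dK_\Delta$ is $d(2g-2)$ rather than $2d-2$ (the needed inequality $>2g-2$ still holds for $d\geq 2$), and $\binom{g+d-1}{d}$ is already $\dim k[X_{i,j}]_d$ itself, not that quantity minus $g$.
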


\begin{proof}
From Lemma~\ref{toricdegreed} it follows that
$$\dim \mathcal{I}_d(\text{Tor}(\Delta^{(1)}))={g+d-1\choose d}-\sharp (d\Delta^{(1)} \cap \mathbb{Z}^2).$$
On the other hand, let $H(d)$ be the Hilbert function of the homogeneous coordinate ring of $C_f^\text{can} \subset \mathbb{P}^{g-1}$. Then  
$H(d)=(2g-2)d+(1-g)=(2d-1)(g-1)$ if $d\geq 2$ (see \cite[Cor.~9.4]{eisenbud}), hence 
$$\dim \mathcal{I}_d(C)={g+d-1\choose d}-(2d-1)(g-1).$$ 
So we are left with proving that $$\sharp (d\Delta^{(1)} \cap \mathbb{Z}^2)-\sharp \left(\left((d-1)\Delta^{(1)}\right)^{(1)}\cap \mathbb{Z}^2\right)=(2d-1)(g-1).$$
For this, write $R^{(1)}=\sharp (\partial\Delta^{(1)}\cap\mathbb{Z}^2)$ and consider the Ehrhart polynomial $$\text{Ehr}_{\Delta^{(1)}}(k)=\text{Vol}(\Delta^{(1)})\cdot k^2+\frac{R^{(1)}}{2}\cdot k+1$$ of $\Delta^{(1)}$. Since $\sharp(k\Delta^{(1)}\cap \mathbb{Z}^2)=\text{Ehr}_{\Delta^{(1)}}(k)$ and $\sharp \left(\partial \left(k\Delta^{(1)}\right)\cap \mathbb{Z}^2\right)=kR^{(1)}$ for all $k\in\mathbb{Z}_{\geq 1}$, we have that 
\begin{eqnarray*}
\lefteqn{\sharp (d\Delta^{(1)} \cap \mathbb{Z}^2)-\sharp \left(\left((d-1)\Delta^{(1)}\right)^{(1)}\cap \mathbb{Z}^2\right)}\\
&=& \text{Ehr}_{\Delta^{(1)}}(d) - \text{Ehr}_{\Delta^{(1)}}(d-1) + \sharp \left(\partial \left((d-1)\Delta^{(1)}\right)\cap \mathbb{Z}^2\right) \\
&=& (2d-1)\left(\text{Vol}(\Delta^{(1)})+\frac{R^{(1)}}{2}\right) \\
&=& (2d-1)(g-1).
\end{eqnarray*}
This concludes the proof. \end{proof}

\noindent \emph{Remark.} Some readers may prefer the following cohomological
proof of Theorem~\ref{thm_dimIC} (brief). Assume for ease of exposition
that $\text{Tor}(\Delta)$ is smooth; if not the argument below has to be preceded by a toric blow-up. 
Let $D_{C_f}$ be
a torus-invariant divisor on $\text{Tor}(\Delta)$ that is linearly equivalent to $C_f$, let $K$ be a torus-invariant
canonical divisor on $\text{Tor}(\Delta)$, and define $L = D_{C_f} + K$. 
When tensoring the exact sequence
\[ 0 \rightarrow \mathcal{O}_{\text{Tor}(\Delta)}(-D_{C_f}) \rightarrow \mathcal{O}_{\text{Tor}(\Delta)} \rightarrow \mathcal{O}_{C_f} \rightarrow 0 \]
with $\mathcal{O}_{\text{Tor}(\Delta)}(dL)$, taking cohomology and using the standard toric vanishing
theorems for $H^1$ we get
\[ 0 \rightarrow H^0(\text{Tor}(\Delta), (d-1)L + K) \rightarrow H^0(\text{Tor}(\Delta), dL) \rightarrow H^0(C_f, dL|_{C_f}) \rightarrow 0. \]
The respective dimensions of these spaces are seen to be
\[ \sharp \left(\left((d-1)\Delta^{(1)}\right)^{(1)}\cap \mathbb{Z}^2\right), \ \dim \frac{\mathcal{I}_d(\mathbb{P}^{g-1})}{\mathcal{I}_d(\text{Tor}(\Delta^{(1)}))}, \text{ and } \dim \frac{\mathcal{I}_d(\mathbb{P}^{g-1})}{\mathcal{I}_d(C_f^\text{can})}, \]
(indeed, by adjunction theory $L|_{C_f}$ is a canonical divisor on $C_f$), so that the theorem follows.\\

\noindent Now write
$$ f = \sum_{(i,j) \in \Delta \cap \mathbb{Z}^2} c_{i,j} x^iy^j \in k[x^{\pm 1},y^{\pm 1}]$$
and define $\mathcal{W}_d=\left(\Delta^{(1)}\right)^\circ\cap\left(\frac{1}{d-1}\mathbb{Z}\right)^2$. Note that 
$$\sharp\, \mathcal{W}_d=\sharp \left(\left((d-1)\Delta^{(1)}\right)^{(1)}\cap \mathbb{Z}^2\right).$$ 
To every $w\in \mathcal{W}_d$ we can associate a homogeneous degree $d$ polynomial, as follows. 
For each $(i,j)\in\Delta\cap\mathbb{Z}^2$ there exist $$v_{1,(i,j)},\ldots,v_{d,(i,j)}\in\Delta^{(1)}\cap\mathbb{Z}^2$$ such that 
\begin{equation} \label{normality}
  (i,j) - w = (v_{1,(i,j)} - w) + \ldots + (v_{d,(i,j)} - w).
\end{equation}
This follows from the inclusion $\left((d-1)\Delta^{(1)}\right)^{(1)}+\Delta\subset d\Delta^{(1)}$ and the normality of the polygon $\Delta^{(1)}$.  
The $d$-form $$\mathcal{F}_{d,w}=\sum_{(i,j)\in\Delta\cap\mathbb{Z}^2}\,c_{i,j}X_{v_{1,(i,j)}}\cdots X_{v_{d,(i,j)}}$$
is well-defined modulo the ideal of $\text{Tor}(\Delta^{(1)})$. It clearly vanishes on $\varphi_{\Delta^{(1)}}(U_f)$, 
hence it is contained in the ideal of $C_f^\text{can}$.\\

\noindent The forms $\mathcal{F}_{d,w}$ with $w \in \mathcal{W}_d$ are $k$-linearly independent of 
each other and of the forms in $\mathcal{I}_d(\text{Tor}(\Delta^{(1)}))$. Indeed, this holds because
\[ \chi_d(\mathcal{F}_{d,w}) = (x,y)^{(d-1)w} \cdot f,\]
where $\chi_d$ is the vector space morphism from the proof of Lemma~\ref{toricdegreed}.
Hence any linear combination in which the $\mathcal{F}_{d,w}$'s appear non-trivially
is mapped to a non-zero multiple of $f$, and must therefore be non-zero itself.
By Theorem \ref{thm_dimIC}, we can conclude that 
a basis for $\mathcal{I}_d(C_f^\text{can})$ is obtained by adjoining
$ \left\{ \mathcal{F}_{d,w} \right\}_{w\in\mathcal{W}_d}$ to a basis
for $\mathcal{I}_d(\text{Tor}(\Delta^{(1)}))$. In other words:
\begin{equation} \label{claimId}
\mathcal{I}_d(C_f^\text{can})=\mathcal{I}_d(\text{Tor}(\Delta^{(1)})) \oplus \langle \mathcal{F}_{d,w}\rangle_{w\in\mathcal{W}_d}.\\
\end{equation}

\noindent We are now ready to prove our main theorem.

\begin{theorem} \label{thmcanonical}
Let $\Delta$ be a two-dimensional lattice polygon and 
let $f \in k[x^{\pm 1}, y^{\pm 1}]$ be a Laurent polynomial
satisfying the sufficiently generic condition from the introduction (e.g.\ non-degeneracy).
Assume that $\Delta^{(1)}$ is two-dimensional and let $g = \sharp (\Delta^{(1)} \cap \mathbb{Z}^2)$.
\begin{itemize}
  \item If $\Delta^{(2)} \neq \emptyset$ and $\Delta^{(1)}\not\cong\Upsilon$, then a minimal set of
  generators for $\mathcal{I}(C_f^\emph{can})$ is given by 
  a basis for $\mathcal{I}_2(\emph{Tor}(\Delta^{(1)}))$ and the quadrics $\{\mathcal{F}_{2,w} \}_{w \in \Delta^{(2)} \cap \mathbb{Z}^2}$. 
  \item If $\Delta^{(1)}\cong \Upsilon$ then a minimal set of generators for $\mathcal{I}(C_f^\emph{can})$ 
  is given by the cubic defining $\emph{Tor}(\Delta^{(1)}) \subset \mathbb{P}^3$ and the quadric $\mathcal{F}_{2,w}$ with $\Delta^{(2)}=\{w\}$.
  \item If $\Delta^{(1)}\cong\Sigma$ then a minimal set of generators for $\mathcal{I}(C_f^\emph{can})$ is given by the single quartic $\mathcal{F}_{4,w}$ with $\left(\Delta^{(1)}\right)^\circ\cap\left(\frac{1}{3}\mathbb{Z}\right)^2=\{w\}$.
  \item If $\Delta^{(1)}\cong 2\Sigma$ then a minimal set of generators for $\mathcal{I}(C_f^\emph{can})$ is 
  given by a basis for $\mathcal{I}_2(\emph{Tor}(\Delta^{(1)}))$ and the three cubics $\mathcal{F}_{3,w},\mathcal{F}_{3,w'},\mathcal{F}_{3,w''}$ with $\left(\Delta^{(1)}\right)^\circ\cap\left(\frac{1}{2}\mathbb{Z}\right)^2=\{w,w',w''\}$.
  \item In the other cases a minimal set of generators for the ideal $\mathcal{I}(C_f^\emph{can})$ is given by
  a basis for $\mathcal{I}_2(\emph{Tor}(\Delta^{(1)}))$ and the $g-3$ cubics $\mathcal{F}_{3,w}$ with $w\in \left(\Delta^{(1)}\right)^\circ\cap\left(\frac{1}{2}\mathbb{Z}\right)^2$.
\end{itemize}
\end{theorem}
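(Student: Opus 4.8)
My plan is built around the graded identity (\ref{claimId}): for every $d\geq 2$ it simultaneously computes $\dim \mathcal{I}_d(C_f^{\text{can}})$ and produces an explicit basis, namely a basis of $\mathcal{I}_d(\text{Tor}(\Delta^{(1)}))$ together with the forms $\mathcal{F}_{d,w}$, $w\in\mathcal{W}_d$. Since $\mathcal{I}(C_f^{\text{can}})$ already contains $\mathcal{I}(\text{Tor}(\Delta^{(1)}))$ and all the $\mathcal{F}_{d,w}$, I only have to show that the finite set $G$ proposed in each bullet actually \emph{generates} the ideal; minimality will then be automatic, because by (\ref{claimId}) the members of $G$ are linearly independent modulo $\mathfrak{m}\cdot\mathcal{I}(C_f^{\text{can}})$ — in degree $2$ because $\mathcal{I}_1(C_f^{\text{can}})=0$, in degree $3$ because $\mathcal{I}_3(\text{Tor}(\Delta^{(1)}))$ equals the degree-$3$ part of the ideal generated by the quadrics. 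By the main theorem of Section~\ref{section_toricideal} and the Remark at its end, $\mathcal{I}(\text{Tor}(\Delta^{(1)}))$ is indeed generated by quadrics whenever $\Delta^{(1)}\not\cong\Upsilon$ (and trivially when $\Delta^{(1)}\cong\Sigma$, since $\text{Tor}(\Sigma)=\mathbb{P}^2$). So I would treat $\Delta^{(1)}\cong\Sigma$ (genus $3$) and $\Delta^{(1)}\cong\Upsilon$ (genus $4$) separately, and organise the remaining cases by whether $\Delta^{(2)}$ is empty; note that the bullet $\Delta^{(1)}\cong 2\Sigma$ then becomes a sub-case of the last bullet, as $(2\Sigma)^{(1)}=\emptyset$, $2\Sigma\not\cong\Sigma,\Upsilon$, and $\#\mathcal{W}_3=3=g-3$ there. (Throughout I use $\mathcal{W}_2=(\Delta^{(1)})^\circ\cap\mathbb{Z}^2=\Delta^{(2)}\cap\mathbb{Z}^2$.)

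For $\Delta^{(1)}\cong\Sigma$ one has $\text{Tor}(\Sigma)=\mathbb{P}^2$, and the dimension count from (\ref{claimId}) gives $\dim\mathcal{I}_2(C_f^{\text{can}})=\dim\mathcal{I}_3(C_f^{\text{can}})=0$, $\dim\mathcal{I}_4(C_f^{\text{can}})=1$; hence $C_f^{\text{can}}$ is a smooth plane quartic, whose homogeneous ideal is principal and generated in degree $4$, and by (\ref{claimId}) with $d=4$ its generator may be taken to be $\mathcal{F}_{4,w}$ with $\mathcal{W}_4=\{w\}$. For $\Delta^{(1)}\cong\Upsilon$, the surface $\text{Tor}(\Upsilon)\subset\mathbb{P}^3$ is an irreducible cubic surface, $\mathcal{I}_2(\text{Tor}(\Upsilon))=0$, and (\ref{claimId}) gives $\mathcal{I}_2(C_f^{\text{can}})=\langle Q\rangle$ with $Q=\mathcal{F}_{2,(0,0)}$, while $\mathcal{I}_3(\text{Tor}(\Upsilon))=\langle S\rangle$ with $S$ the defining cubic. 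Both $Q$ and $S$ vanish on $C_f^{\text{can}}$, and $\gcd(Q,S)=1$ since $\text{Tor}(\Upsilon)$ is irreducible; hence $(Q,S)$ cuts out a Cohen--Macaulay curve of degree $6$ and arithmetic genus $4$ containing the smooth degree-$6$ curve $C_f^{\text{can}}$, so the two coincide, $\mathcal{I}(C_f^{\text{can}})=(Q,S)$, and this is minimal.

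For the remaining cases $\Delta^{(1)}\not\cong\Sigma,\Upsilon$, so $\mathcal{I}(\text{Tor}(\Delta^{(1)}))$ is generated by quadrics. I take $G$ to consist of a basis of $\mathcal{I}_2(\text{Tor}(\Delta^{(1)}))$ together with the $\mathcal{F}_{2,w}$, $w\in\mathcal{W}_2$, if $\Delta^{(2)}\neq\emptyset$, or the $\mathcal{F}_{3,w}$, $w\in\mathcal{W}_3$, if $\Delta^{(2)}=\emptyset$ (in the latter case a short computation with Pick's and Ehrhart's formulas for $\Delta^{(1)}$ gives $\#\mathcal{W}_3=g-3$). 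Then $(G)\subseteq\mathcal{I}(C_f^{\text{can}})$ and $(G)\supseteq\mathcal{I}(\text{Tor}(\Delta^{(1)}))$, so by (\ref{claimId}) it suffices to show $(G)$ contains every $\mathcal{F}_{d,w}$. The lifting mechanism is: whenever $v\in\Delta^{(1)}\cap\mathbb{Z}^2$ and $(d-1)w=(d-2)w'+v$ with $w'\in\mathcal{W}_{d-1}$, the element $X_v\cdot\mathcal{F}_{d-1,w'}-\mathcal{F}_{d,w}$ is killed by the map $\chi_d$ from the proof of Lemma~\ref{toricdegreed} (both sides have $\chi$-image $(x,y)^{(d-1)w}f$), hence lies in $\mathcal{I}_d(\text{Tor}(\Delta^{(1)}))\subseteq(G)$, so $\mathcal{F}_{d,w}\in(G)$ once $\mathcal{F}_{d-1,w'}\in(G)$. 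By induction on $d$, everything thus reduces to the combinatorial claim that \emph{every interior lattice point of $(d-1)\Delta^{(1)}$ is the sum of an interior lattice point of $(d-2)\Delta^{(1)}$ and a lattice point of $\Delta^{(1)}$} — needed for $d\geq 3$ if the induction is seeded in degree $2$ (the case $\Delta^{(2)}\neq\emptyset$) and for $d\geq 4$ if it is seeded in degree $3$, where one uses $\Delta^{(1)}\not\cong\Sigma$ to know that $(d-2)\Delta^{(1)}$ has interior lattice points. Granting this, $(G)=\mathcal{I}(C_f^{\text{can}})$, and $G$ is minimal as explained in the first paragraph.

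I expect this combinatorial lemma on decomposing interior lattice points of dilates to be the main obstacle, above all its base instance ($\Delta^{(2)}\neq\emptyset$, $d=3$), which asks that every interior lattice point of $2\Delta^{(1)}$ equal (interior lattice point of $\Delta^{(1)}$) $+$ (arbitrary lattice point of $\Delta^{(1)}$). I would attack it by combining the normality of two-dimensional lattice polygons (already invoked in Lemma~\ref{toricdegreed}) with the description of $(k\Delta^{(1)})^{(1)}$ as $k\Delta^{(1)}$ with all facet inequalities tightened by $1$ — valid since $\#(\partial\Delta^{(1)}\cap\mathbb{Z}^2)\geq 4$ — thereby reducing to bookkeeping of lattice points near the boundary; the subcase where $\Delta^{(1)}$ has a single interior lattice point may have to be matched against the classification of reflexive polygons. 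A less self-contained route for these cases is to invoke Petri's theorem: using (\ref{claimId}) one checks directly that when $\Delta^{(2)}\neq\emptyset$ the quadrics through $C_f^{\text{can}}$ have one-dimensional common zero locus (it is $\text{Tor}(\Delta^{(1)})$ cut by the proper hypersurfaces $\mathcal{F}_{2,w}=0$), whereas when $\Delta^{(2)}=\emptyset$ the polygon $\Delta^{(1)}$ has lattice width $1$ or is $\cong 2\Sigma$, making $C_f$ trigonal, respectively a smooth plane quintic; in each case Petri fixes the generation degrees and (\ref{claimId}) names the generators.
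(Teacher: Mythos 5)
Your ``less self-contained route'' is, in essence, the paper's actual proof: the paper also reduces everything to the splitting (\ref{claimId}) together with Petri's theorem, treats $\Delta^{(1)}\cong\Sigma$ and $\Delta^{(1)}\cong\Upsilon$ by hand exactly as you do (plane quartic, respectively quadric plus toric cubic in $\mathbb{P}^3$), and in the remaining $\Delta^{(2)}=\emptyset$ cases uses that $\Delta^{(1)}$ is either $2\Sigma$ (plane quintic) or of lattice width one (trigonal), so that Petri bounds the generation degree by $3$ and (\ref{claimId}) then names and counts the generators; your minimality argument (no linear forms in degree $2$; in degree $3$ the quadrics only generate $\mathcal{I}_3(\text{Tor}(\Delta^{(1)}))$, off which the $\mathcal{F}_{3,w}$ are independent) is also the implicit content of the paper's appeal to (\ref{claimId}). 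The one point where you genuinely diverge in this route is the hypothesis of Petri in the first bullet: the paper quotes \cite[Thm.~8.1]{CaCo1} to get Clifford index $\geq 2$ from $\Delta^{(2)}\neq\emptyset$ and $\Delta^{(1)}\not\cong\Upsilon$, whereas you propose to deduce non-trigonality and non-plane-quinticity from the fact that the quadrics already cut out a one-dimensional locus (namely a hypersurface section of the irreducible surface $\text{Tor}(\Delta^{(1)})$, which is itself cut out by quadrics since $\sharp(\partial\Delta^{(1)}\cap\mathbb{Z}^2)\geq 4$, by a quadric $\mathcal{F}_{2,w}$ not vanishing on it because $\chi_2(\mathcal{F}_{2,w})\neq 0$), and then invoke Enriques--Babbage. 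That substitution is legitimate and arguably more self-contained than the paper's citation.

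Your \emph{primary} route, by contrast, has a genuine gap precisely where you flag it. The lifting step is sound: $X_v\cdot\mathcal{F}_{d-1,w'}-\mathcal{F}_{d,w}$ is indeed killed by $\chi_d$ and hence lies in $\mathcal{I}_d(\text{Tor}(\Delta^{(1)}))$. But the entire induction hinges on the claim that every interior lattice point of $(d-1)\Delta^{(1)}$ is the sum of an interior lattice point of $(d-2)\Delta^{(1)}$ and a lattice point of $\Delta^{(1)}$, and this is nowhere proved --- you only sketch strategies for attacking it. It does not follow from normality (which gives decompositions without the interiority constraint), and its base case $d=3$ is essentially the whole difficulty: it is the combinatorial shadow of the statement that $\mathcal{I}(C_f^{\text{can}})$ is generated in degree $2$, which is exactly what Petri supplies and what the paper deliberately outsources. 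So as written, the self-contained route replaces the theorem by an equivalent unproven lattice-point lemma; only the Petri route yields a complete proof.
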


\begin{proof} 
From \cite[Thm.~8.1]{CaCo1}, the assumptions $\Delta^{(2)}\neq \emptyset$ and $\Delta^{(1)} \not\cong \Upsilon$ imply that the Clifford index of $C_f$ is at least $2$. 
In this case Petri's
theorem \cite{saintdonat} guarantees that $\mathcal{I}(C_f^\text{can})$ is generated by quadrics and the statement follows from (\ref{claimId}).\\

\noindent As for the other cases:
\begin{itemize}
  \item If $\Delta^{(1)} \cong \Upsilon$, the claim follows by noting that $\text{Tor}(\Upsilon)$ is cut out by the cubic $X_{-1,-1}X_{1,0}X_{0,1} - X_{0,0}^3$ and that a canonical curve of genus $4$ is of degree $6$, so that a single (necessarily unique) quadric suffices.
  \begin{center}
    \includegraphics[scale=0.9]{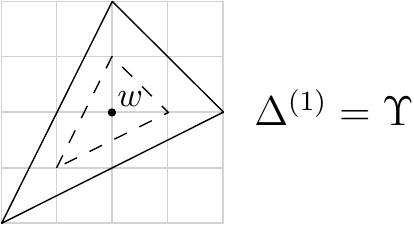}
  \end{center}
  \item The case $\Delta^{(1)}\cong\Sigma$ corresponds to smooth plane quartics and is obvious. 
  \item If $\Delta^{(1)} \cong 2\Sigma$ then $C_f$ is a smooth plane quintic. By Petri's theorem 
  we know that $\mathcal{I}(C_f^\text{can})$ is generated by quadrics and cubics. 
  Since $\mathcal{I}(\text{Tor}(\Delta^{(1)})$ is generated by quadrics, 
  the statement follows from (\ref{claimId}). (Note that $\text{Tor}(\Delta^{(1)})$ is just the Veronese surface.)
  \begin{center}
    \includegraphics[scale=0.9]{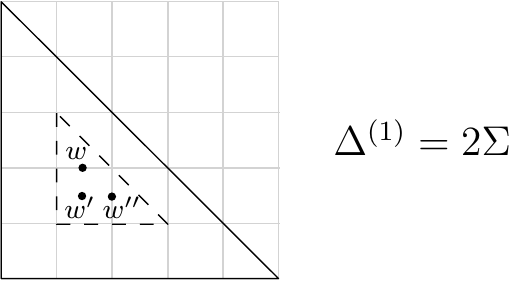}
  \end{center}
  \item In the other cases $C_f$ is a trigonal curve and $\sharp \left( \partial \Delta^{(1)} \cap \mathbb{Z}^2 \right) \geq 4$, so
  that $\text{Tor}(\Delta^{(1)})$ is generated by quadrics.
  By Petri's theorem 
  we know that $\mathcal{I}(C_f^\text{can})$ is generated by quadrics and cubics, 
  so that the statement again follows from (\ref{claimId}). (Note that
  $\text{Tor}(\Delta^{(1)})$ is a rational normal surface scroll.) Remark that
  \[ \sharp \left( \left(\Delta^{(1)}\right)^\circ\cap\left(\frac{1}{2}\mathbb{Z}\right)^2 \right) = \sharp \left( \left( 2 \Delta^{(1)} \right)^{(1)} \cap \mathbb{Z}^2 \right) = g - 3 \]
  by Pick's theorem.
  \begin{center}
    \includegraphics[scale=0.9]{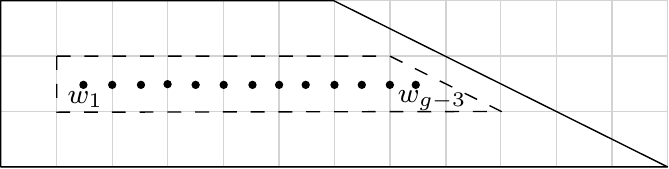}
  \end{center}
  
\end{itemize}
This concludes the proof.
\end{proof}


\noindent We remark that in the last case of trigonal curves, the generators $\mathcal{F}_{3,w}$ are just the `rolling factors'
that were introduced by Reid; see \cite{Stevens}. For more general polygons, 
our forms $\mathcal{F}_{d,w}$ can be viewed as analogues of these, where the `rolling' is done in two directions instead of one.\\

\noindent Theorem~\ref{thmcanonical} immediately gives rise to an efficient
algorithm for computing a minimal set of generators for the canonical ideal of $C_f$, for a given Laurent polynomial
$f \in k[x^{\pm 1}, y^{\pm 1}]$ that is non-degenerate with respect to
its Newton polygon $\Delta(f)$. As before we assume that 
\begin{itemize}
\item $\sharp (\Delta(f)^{(1)} \cap \mathbb{Z}^2) \geq 3$, so that $C_f$ is of genus $g \geq 3$, and
\item $\Delta(f)^{(1)}$ is two-dimensional, so that $C_f$ is non-hyperelliptic, or equivalently that
its Clifford index is at least $1$ (otherwise
the canonical image is just a rational normal curve).
\end{itemize}
In case $\Delta(f)^{(1)} \cong \Sigma$ the output consists of a single quartic. If not, 
it consists of independent quadratic and cubic generators
of the canonical ideal, i.e.\
${g-2 \choose 2}$ quadrics and $g-3$ cubics in the case of Clifford index $1$, and
just ${g-2 \choose 2}$ quadrics in the case of Clifford index at least $2$. 
Indeed, all one needs to do is adding the appropriate $\mathcal{F}_{d,w}$'s
to a minimal set of generators for $\text{Tor}(\Delta^{(1)})$. 
Finding these $\mathcal{F}_{d,w}$'s boils down to finding relations of the form (\ref{normality}), which
can be done by exhaustive search. 
An implementation can be found in the Magma file \texttt{canonicalideal.m}
that accompanies this paper. The function of interest is called \texttt{NondegIdeal()}.\\

\noindent \emph{Example.} The following sample code computes the canonical ideal of a genus $14$ curve in a fraction of a second:\\
\noindent \texttt{> load "canonical.m"}\\
\noindent \texttt{Loading "canonical.m"}\\
\noindent \texttt{Loading "basic\char`_commands.m"}\\
\noindent \verb"> R<x,y> := PolynomialRing(Rationals(),2);"\\
\noindent \verb"> f :=  13*x^6*y^5 - 6*x^6*y^4 + 2*x^3*y^5 + 4*x^3*y^4 + x^3 + 3*y^4;"\\
\noindent \verb"> AA := AffineSpace(Rationals(),2);"\\
\noindent \verb"> C := Curve(AA,f);"\\
\noindent \verb"> Genus(C);"\\
\noindent \verb"14"\\
\noindent \verb"> time I := Ideal(NondegIdeal(f));"\\
\noindent \verb"Time: 0.130"\\
\noindent In sharp contrast, it takes the Magma intrinsic  
way over an hour.\\
\noindent \verb"> time I := Ideal(Image(CanonicalMap(C)));"\\
\noindent \verb"Time: 5405.360"\\
\noindent Note moreover that in the latter case, in general, the output does not consist of a minimal
set of generators.\\

\noindent \emph{Remark.} Here again, the method can be slightly improved by taking into account the corresponding remark
from Section~\ref{section_toricideal}, i.e.\
by computing a set of generators for $\mathcal{I}(\text{Tor}(\Delta^{(1)}))$ using Gr\"obner
bases. It is also possible to do this at once for the entire ideal
$\mathcal{I}(C_f^\text{can})$, as below (continuation of the above example):\\
\noindent \verb"> latticepoints := ConvexHull(InnerPoints(NewtonPolytope(f)));"\\ 
\noindent \verb"> g := #latticepoints;"\\
\noindent \verb"> PP := ProjectiveSpace(Rationals(), g-1);"\\
\noindent \verb"> phi_can := map< C->PP | [x^p[1]*y^p[2] : p in latticepoints] >;"\\
\noindent \verb"> time I := Ideal(Image(phi_can));"\\
\noindent \verb"Time: 0.370"\\
This is already much faster than the Magma intrinsic, but slower than the previous method (the difference
in timing increases as the genus grows). Note again that the output does not necessarily
consist of a minimal set of generators.

\noindent \textsc{Departement Wiskunde, KU Leuven}\\
\noindent \textsc{Celestijnenlaan 200B, 3001 Leuven (Heverlee), Belgium}\\
\noindent \emph{E-mail address:} \verb"wouter.castryck@wis.kuleuven.be"\\

\noindent \textsc{Department of Mathematics and Applied Mathematics, University of Cape Town}\\
\noindent \textsc{Private Bag X1, Rondebosch 7701, South Africa}\\
\noindent \emph{E-mail address:} \verb"filip.cools@uct.ac.za"\\

\end{document}